\documentclass[12pt]{article}
\usepackage{amssymb,amsmath}
\usepackage{amsthm}
\usepackage{amsfonts}
\usepackage{latexsym}
\begin{document}
\baselineskip18pt
\newcommand{\h}{\mathscr{H}}
\newcommand{\m}{\text{M}}
\newcommand{\uu}{\textbf{u}}
\newcommand{\OO}{\Omega^*}
\newcommand{\xx}{\textbf{X}}
\newcommand{\R}{\text{Ra}}
\newcommand{\dd}{\mathcal{D}}
\newtheorem{theorem}{\sf Theorem}[section]
\newtheorem{lemma}[theorem]{\sf Lemma}
\newtheorem{proposition}[theorem]{\sf Proposition}
\newtheorem{corollary}[theorem]{\sf Corollary}
\newenvironment{definition}[1][\sf Definition:]{\begin{trivlist}
\item[\hskip \labelsep {\bfseries #1}]}{\end{trivlist}}
\newenvironment{example}[1][\sf Example:]{\begin{trivlist}
\item[\hskip \labelsep {\bfseries #1}]}{\end{trivlist}}
\newenvironment{examples}[1][\sf Examples:]{\begin{trivlist}
\item[\hskip \labelsep {\bfseries #1}]}{\end{trivlist}}
\newenvironment{remark}[1][\sf Remark:]{\begin{trivlist}
\item[\hskip \labelsep {\bfseries #1}]}{\end{trivlist}}
\numberwithin{equation}{section}
\parindent=0cm
\title{Sums of products involving power sums of $\varphi(n)$ integers}
\date{}
\author{Jitender Singh$^1$}
\maketitle
\begin{abstract}
A sequence of rational numbers as a generalization of the sequence of Bernoulli numbers is introduced. Sums of products involving the terms of this generalized sequence are then obtained using an application of the Fa\`a di Bruno's formula. These sums of products are analogous to the higher order Bernoulli numbers and are used to  develop the closed form expressions for the sums of products 
involving the power sums $\displaystyle \Psi_k(x,n):=\sum_{d|n}\mu(d)d^k S_k\left(\frac{x}{d}\right), n\in\mathbb{Z}^+$ which are defined via the M\"obius function $\mu$ and the usual power sum $S_k(x)$ of a
real or complex variable $x.$  The  power sum $S_k(x)$ is expressible in terms of the well known Bernoulli polynomials by $\displaystyle S_k(x):=\frac{B_{k+1}(x+1)-B_{k+1}(0)}{k+1}.$
\\~\\
~\\~
{\em Keywords:} Power sums, Euler totient, Bernoulli numbers, M\"obius Bernoulli numbers,
Sums of products
\end{abstract}
\footnotetext[1]{Department of Mathematics, Guru Nanak Dev
University, Amritsar-143005, INDIA\\
{\tt sonumaths@gmail.com;~jitender.math@gndu.ac.in}}
\section{Introduction}
Singh \cite{JS9} introduced the power sum $\Psi_k(x,n)$ of real or complex variable $x$ and positive integer $n$ defined by
the generating function
\begin{equation}\label{eq5}
\sum_{d|n}\mu(d)\frac{e^{\left(1+\frac{x}{d}\right)td}-e^{td}}{e^{td}-1}=\sum_{k=0}^{\infty}\Psi_k(x,n)\frac{t^k}{k!}
\end{equation}
from which he derived the following closed form formula for these power sums:
\begin{equation}\label{eq5}
\Psi_k(x,n)=\frac{1}{k+1}\sum_{m=0}^{[\frac{k}{2}]}\left(
                          \begin{array}{c}
                            k+1 \\
                            2m\\
                          \end{array}
                        \right)B_{2m}x^{k+1-2m}\prod_{p|n}(1-p^{2m-1}),~k=0,1,...
\end{equation}
where $B_m$ are the Bernoulli numbers and $p$ runs over all prime  divisors of $n.$
In particular $\Psi_k(n,n)$ gives the sum of $k-$th power of those positive integers which are less then $n$ and relatively prime to $n.$ We will call $\Psi_k(x,n)$ as \emph{M\"obius-Bernoulli power sums}. Present work is aimed at describing sums of products of the power sums $\Psi_k(x,n)$ via introducing yet another sequence of rational numbers which we shall call as the sequence of \emph{M\"obius-Bernoulli numbers}. The rational sequence $\{B_k\}$ that appears in Eq.\eqref{eq5} is defined via the generating function
\begin{equation*}\label{eq3}
    \frac{t}{e^t-1}=\sum_{k=0}^{\infty}B_k \frac{t^k}{k!},~|t|<2\pi
\end{equation*}
and was known to Faulhaber and Bernoulli. 
Many explicit formulas for the Bernoulli numbers are also well known in literature. One such formula is the following \cite{h}:
\begin{equation*}\label{eq4}
B_k=\sum_{m=1}^{k}\frac{1}{m+1}\sum_{n=1}^{m}(-1)^n \left(
                          \begin{array}{c}
                            m \\
                            n\\
                          \end{array}
                        \right) n^k,~k=0,1,...
\end{equation*}
\section{M\"obius-Bernoulli numbers}
\begin{definition}We define M\"obius-Bernoulli numbers $M_k(n),~k=0,1,...$ via the generating function
    \begin{equation}\label{eq11}
      \sum_{d|n} \frac{t \mu(d)}{e^{td}-1}=\sum_{k=0}^{\infty}M_k(n)\frac{t^k}{k!},~|t|<\frac{2\pi}{n}~\forall~n=1,2,...
    \end{equation}
\end{definition}
We immediately notice from the Eq. \eqref{eq11} that the M\"obius-Bernoulli numbers are given by
\begin{equation}\label{eq12}
      M_k(1)=B_k;~M_k(n)=B_k\prod_{p|n}(1-p^{k-1}) ~\mbox{for all}~ n\geq 2,~k=0,1,....
    \end{equation}
Singh\cite{JS9} has obtained the following identity relating the function $\Psi_k(x,n)$ to the M\"obius-Bernoulli numbers. 
$\displaystyle \frac{d}{dx}\Psi_k(x,n)=k\Psi_{k-1}(x,n)+(-1)^k M_k(n),$ from which we observe that $\Psi_k(x,1)=S_k(x),$ $\Psi_0(x,n)=\varphi(n),~\Psi_k(0,n)=0~\forall~k=0,1,...$ where $\varphi(n)$ is the Euler's totient.
\begin{definition} Let $n$ be a positive integer and $k$ a nonnegative integer. Define higher order M\"obius-Bernoulli numbers by
$$\displaystyle M_k^N(n):=\sum_{\sum_{i=1}^{N}k_i=k}\left(\begin{array}{c}
                                                   k \\
                                                   k_1,...,k_N
                                                 \end{array}\right) M_{k_1}(n)\cdots M_{k_N}(n),
$$ which are described by the generating function
    \begin{equation}\label{eq13}
    H(t)=  \left(\sum_{d|n} \frac{t \mu(d)}{e^{td}-1}\right)^N=\sum_{k=0}^{\infty}M_k^N(n)\frac{t^k}{k!},~|t|<\frac{2\pi}{n}~\forall~n,N=1,2,...
    \end{equation}
\end{definition}
Note that $M_k^1(n)=M_k(n)~\forall~k=0,1,...$ Some of the first few higher order M\"obius-Bernoulli numbers are given by the following:
\begin{table}[h!]
\begin{tabular}{ll}
  $M_0^N(n)$ &$=(\varphi(n))^{N}$ \\
  $M_2^N(n)$&$=N(\varphi(n))^{N-1}M_2 (n)$ \\
  $M_4^N(n)$&$=N(\varphi(n))^{N-2}\left\{3(N-1) (M_2 (n))^2+\varphi(n)M_4(n)\right\}$ \\
  $M_6^N(n)$&$=N(\varphi(n))^{N-3}\left\{15(N-1)(N-2)(M_2 (n))^2+\right.$\\ & $\left. 15(N-1)\varphi(n)M_2(n)M_4(n)+\varphi(n)^2 M_6(n)\right\}$\\
 etc.
\end{tabular}
\end{table}\\
Note that $\displaystyle M_k^N(n)=\lim_{t\rightarrow 0} \frac{d^kH(t)}{dt^k}.$ In this regard, a formula for the higher order M\"obius-Bernoulli numbers can be obtained from the following version of the well known Fa\`a di Bruno's formula \cite{johnson}.
\begin{lemma}\label{L1}
    Let $N$ be a positive integer and $f:\mathbb{R}\rightarrow \mathbb{R}$ be a function of class $C_k,~k\geq 1.$ {\fontsize{10}{12} Then
\begin{equation}\label{eq91}
    D^k(f(x))^N=N!\sum_{j=1}^{k} \frac{(f(x))^{N-j}}{(N-j)!}\sum_{\sum_{i=1}^{j}k_i=k}\left(\begin{array}{c}
                                                      k \\
                                                      k_1,...,k_j
                                                    \end{array}\right) \frac{D^{k_1}f(x)\cdots D^{k_j}f(x)}{\lambda(k_1)!\cdots \lambda(k_j)!}
\end{equation}}
where $D^k=\frac{d^k}{dx^k}, k=1,2,...;$ $\lambda(k_i)$ is the multiplicity of occurrence of $k_i$ in the partition $\{k_1,...,k_j\}$ of $n$ of length $j$ and $\lambda(k_i)!$ contributes only once in the above product.
\end{lemma}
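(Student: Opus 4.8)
The plan is to obtain \eqref{eq91} as the specialization $g(u)=u^N$ of the classical (partition/``type'') form of Fa\`a di Bruno's formula recorded in \cite{johnson}. Recall that for $g,f\in C^k$,
\begin{equation*}
D^k g(f(x))=\sum_{\substack{m_1,\dots,m_k\ge 0\\ m_1+2m_2+\cdots+km_k=k}}\frac{k!}{\prod_{i=1}^k m_i!\,(i!)^{m_i}}\;g^{(m_1+\cdots+m_k)}(f(x))\prod_{i=1}^k\bigl(f^{(i)}(x)\bigr)^{m_i},
\end{equation*}
this being the regrouping, by the type $1^{m_1}2^{m_2}\cdots$, of the sum over set partitions of $\{1,\dots,k\}$. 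For $g(u)=u^N$ we have $g^{(j)}(u)=\frac{N!}{(N-j)!}u^{N-j}$ when $0\le j\le N$ and $g^{(j)}\equiv 0$ when $j>N$. Writing $j:=m_1+\cdots+m_k$ for the number of parts and pulling the factor $\frac{N!}{(N-j)!}(f(x))^{N-j}$ out of the sum over all types with a fixed number $j$ of parts, and noting that $k\ge 1$ forces $j\ge 1$ while the terms with $j>N$ vanish, produces exactly the outer sum $N!\sum_{j=1}^{k}\frac{(f(x))^{N-j}}{(N-j)!}(\cdots)$ on the right of \eqref{eq91} (with the convention $1/(N-j)!=0$ for $j>N$, so that the effective range of $j$ is $1\le j\le\min(k,N)$).

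It remains to check that, for each fixed $j$, the inner sum over types coincides with the inner sum over partitions $\{k_1,\dots,k_j\}$ of $k$ of length $j$ in the statement. This is a purely combinatorial bookkeeping step: a type $1^{m_1}\cdots k^{m_k}$ with $\sum_i m_i=j$ and $\sum_i i\,m_i=k$ corresponds to the partition of $k$ in which the part $i$ occurs with multiplicity $m_i$, i.e.\ $\lambda(i)=m_i$. Under this correspondence $\binom{k}{k_1,\dots,k_j}=k!/\prod_i (i!)^{m_i}$, the product of factorials of the distinct multiplicities equals $\prod_i m_i!$ (this is precisely the content of the clause ``$\lambda(k_i)!$ contributes only once in the above product''), and $D^{k_1}f\cdots D^{k_j}f=\prod_i (f^{(i)})^{m_i}$. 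Hence the generic summand $\binom{k}{k_1,\dots,k_j}\frac{D^{k_1}f\cdots D^{k_j}f}{\lambda(k_1)!\cdots\lambda(k_j)!}$ equals $\frac{k!}{\prod_i m_i!\,(i!)^{m_i}}\prod_i(f^{(i)})^{m_i}$, and summing over all partitions of $k$ of length $j$ is the same as summing over all admissible types with $j$ parts; term-by-term equality then yields \eqref{eq91}.

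I expect the only real friction to be this reconciliation of indexing conventions --- pinning down the bijection ``types $\leftrightarrow$ partitions'' and confirming that the symmetry factor $\prod_i m_i!$ is exactly what the informal phrasing about $\lambda(k_i)!$ denotes; once that is settled the computation is routine. An alternative that avoids quoting the classical identity is induction on $N$: the case $N=1$ reduces \eqref{eq91} to $D^kf$ directly, and the passage from $N$ to $N+1$ follows by the Leibniz rule applied to $D^k\bigl(f\cdot f^{N}\bigr)$, expanding $\sum_{\ell}\binom{k}{\ell}D^\ell f\cdot D^{k-\ell}(f^N)$ and substituting the inductive formula for $D^{k-\ell}(f^N)$. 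There the harder point becomes the combinatorial identity needed to adjoin a new part of size $\ell$ to each existing partition while correctly updating the multinomial coefficients and the multiplicity factors, so I would favour the specialization argument above.
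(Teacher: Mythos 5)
Your proof is correct, but it takes a genuinely different route from the paper. You obtain \eqref{eq91} by quoting the classical partition/``type'' form of Fa\`a di Bruno's formula and specializing the outer function to $g(u)=u^N$, so that $g^{(j)}(u)=\frac{N!}{(N-j)!}u^{N-j}$ and the outer sum over the number of parts $j$ falls out immediately; the only work is the (correctly executed) dictionary between types $1^{m_1}\cdots k^{m_k}$ and length-$j$ partitions of $k$, under which $\binom{k}{k_1,\dots,k_j}=k!/\prod_i(i!)^{m_i}$ and the symmetry factor $\prod_i m_i!$ is exactly the product of the $\lambda(k_i)!$ taken once each. The paper instead proves the identity from scratch by induction on $k$: it differentiates the inductive formula once, and sorts the resulting terms according to whether a partition of $k+1$ arises from a partition of $k$ by adjoining a new part equal to $1$ (its set $S$, which raises the length $j$ by one) or by incrementing an existing part (its set $T$, which preserves $j$), then reassembles the two groups. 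Your argument is shorter, self-evidently rigorous, and rests on a standard result that the paper's own reference \cite{johnson} supplies; it also handles the range issue $j>N$ explicitly via the convention $1/(N-j)!=0$, which the paper leaves implicit. What the paper's induction buys is self-containedness --- it does not presuppose the classical formula --- at the cost of exactly the delicate multiplicity bookkeeping you flagged as the reason to avoid the inductive route. Your fallback suggestion of induction on $N$ via Leibniz is a third viable path, but your primary specialization argument is the cleanest of the three.
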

\begin{proof} We use induction on $k$ in proving the result. For $k=1,$ we see that $j=1$  in the RHS of Eq.\eqref{eq91}  and  it reduces to $N (f(x))^{N-1}D^1 (f(x))=D^1(f(x)^N).$ This proves that the result is true for $k=1.$ Let us assume that the formula \eqref{eq91} holds for all positive integers $\leq k.$ Now assume that $f$ is of class $C_{k+1}$ and consider
{\fontsize{10}{0} \begin{equation}\label{eq92}
    \begin{split}
    D^{k+1}((f(x))^N)
               =&N!\sum_{j=1}^{k-1} \frac{(f(x))^{N-(j+1)}}{(N-(j+1))!}\sum_{\sum_{i=1}^{j}k_i=k}\left(\begin{array}{c}
                                                      k \\
                                                      k_1,...,k_j
                                                    \end{array}\right) \frac{D^1f(x)D^{k_1}f(x)\cdots D^{k_j}f(x)}{\lambda(k_1)!\cdots \lambda(k_j)!}\\                                     +&N!\frac{(f(x))^{N-k-1}}{(N-k-1)!}(Df(x))^{k+1}
                                                    \\+&N!\sum_{j=1}^{k} \frac{(f(x))^{N-j}}{(N-j)!}\sum_{\sum_{i=1}^{j}k_i=k}\left(\begin{array}{c}
                                                      k \\
                                                      k_1,...,k_j
                                                    \end{array}\right) \frac{D(D^{k_1}f(x)\cdots D^{k_j}f(x))}{\lambda(k_1)!\cdots \lambda(k_j)!}
    \end{split}
\end{equation}}
At this point observe that any partition $\pi'$ of $k+1$ can be  obtained from a partition $\pi$ of $k$ by adjoining $1$ and let us denote the set of all such partitions of $k+1$ by $S.$ Denote by $T$ the set of remaining all partitions of $k+1$ where each  $\pi'$ is obtained simply by adding $1$ to exactly one member of $\pi.$
In each of these cases one has $k+1$ choices of doing so for a fixed $\pi.$ In the former case for each $\pi'\in S,$ $|\pi'|=|\pi|+1$ which happens in the first summation above in \eqref{eq92} which reduces to the following
{\fontsize{10}{0}\begin{equation}\label{eq93}
  (k+1)N!\sum_{j=1}^{k} \frac{(f(x))^{N-j}}{(N-j)!}\sum_{\{k_1',\cdots,k_{j+1}'\}\in S}\left(\begin{array}{c}
                                                      k \\
                                                      k_1',...,k_{j+1}'
                                                    \end{array}\right) \frac{D^{k_1'}f(x)\cdots D^{k_{j+1}'}f(x)}{\lambda(k_1')!\cdots \lambda(k_j')!}.
\end{equation}}
In the latter case for each $\pi'\in T,$ $|\pi'|=|\pi|$ and the terms after first summation in \eqref{eq92} reduce to {\fontsize{10}{0}
\begin{equation}\label{eq94}
  (k+1)N!\sum_{j=1}^{k+1} \frac{(f(x))^{N-j}}{(N-j)!}\sum_{\{k_1',\cdots,k_{j+1}'\}\in T}\left(\begin{array}{c}
                                                      k \\
                                                      k_1',...,k_{j+1}'
                                                    \end{array}\right) \frac{D^{k_1'}f(x)\cdots D^{k_{j+1}'}f(x)}{\lambda(k_1')!\cdots \lambda(k_j')!}
\end{equation}}
where the term $\displaystyle N!\frac{(f(x))^{N-k-1}}{(N-k-1)!}(Df(x))^{k+1}$ corresponds to $j=k+1.$ The result follows by substituting \eqref{eq93} and \eqref{eq94} in \eqref{eq92}.  This completes the final step of induction.
\end{proof}
\begin{theorem}\label{th12} For each positive integer $N$ $\&$ $n,$ the higher order M\"obius-Bernoulli numbers are given by
    {\fontsize{11}{0}\begin{equation}\label{eq95}
        M_{k}^N(n)=N!\sum_{j=1}^{k} \frac{(\varphi(n))^{N-j}}{(N-j)!}\sum_{\sum_{i=1}^{j}k_i=k}\left(\begin{array}{c}
                                                      k \\
                                                      k_1,...,k_j
                                                    \end{array}\right) \frac{M_{k_1}(n)\cdots M_{k_j}(n)}{\lambda(k_1)!\cdots \lambda(k_j)!}.
    \end{equation}}
\end{theorem}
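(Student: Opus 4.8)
The plan is to realize $M_k^N(n)$ as a derivative at the origin of the $N$-th power of a single smooth function, and then to invoke Lemma~\ref{L1} essentially verbatim. First I would set
$$g(t):=\sum_{d\mid n}\frac{t\,\mu(d)}{e^{td}-1},\qquad H(t)=\bigl(g(t)\bigr)^N,$$
so that, by the remark following \eqref{eq13}, $M_k^N(n)=\lim_{t\to 0}\frac{d^k H}{dt^k}$. Then I would record two facts about $g$. First, $g$ is of class $C^\infty$ (indeed real-analytic) on a neighbourhood of $0$: each summand $t\mapsto t/(e^{td}-1)$ has a removable singularity at $t=0$ and is real-analytic on $|t|<2\pi/d$, and since $d\mid n$ forces $d\le n$ we have $2\pi/n\le 2\pi/d$, so the finite sum $g$ is real-analytic on $|t|<2\pi/n$. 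Second, comparison with \eqref{eq11} gives the Taylor expansion $g(t)=\sum_{m\ge 0}M_m(n)\,t^m/m!$, whence $g^{(m)}(0)=M_m(n)$ for every $m\ge 0$, and in particular $g(0)=M_0(n)=\varphi(n)$.

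With this in hand the theorem is one substitution away. Fixing $k\ge 1$, I would apply Lemma~\ref{L1} to $f=g$ (the variable being renamed $x\mapsto t$), which is legitimate because $g$ is of class $C^k$ near $0$, to obtain
$$\frac{d^k}{dt^k}\bigl(g(t)\bigr)^N=N!\sum_{j=1}^{k}\frac{\bigl(g(t)\bigr)^{N-j}}{(N-j)!}\sum_{\sum_{i=1}^{j}k_i=k}\binom{k}{k_1,\dots,k_j}\frac{g^{(k_1)}(t)\cdots g^{(k_j)}(t)}{\lambda(k_1)!\cdots\lambda(k_j)!}.$$
Letting $t\to 0$, the left-hand side tends to $M_k^N(n)$, while on the right-hand side $g(t)\to\varphi(n)$ and each $g^{(k_i)}(t)\to M_{k_i}(n)$ (the $k_i$ being positive, as they index a partition of $k$ into $j$ parts); this is exactly \eqref{eq95}. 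For the degenerate value $k=0$ one simply has $M_0^N(n)=H(0)=(g(0))^N=(\varphi(n))^N$, consistent with the table.

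I do not anticipate a genuine obstacle: the argument is bookkeeping once Lemma~\ref{L1} is available. The only point that will require attention is the justification that $g$ is smooth enough at $t=0$ for the Fa\`a di Bruno formula to apply, and this is already encoded in \eqref{eq11} through the bound $|t|<2\pi/n$. It is perhaps worth remarking in the write-up that \eqref{eq95} is nothing more than the ``sum over partitions of $k$'' repackaging of the ``sum over compositions into $N$ nonnegative parts'' that defines $M_k^N(n)$ in \eqref{eq13}: the factors $\lambda(\cdot)!$ compensate for orderings of equal parts, and $(\varphi(n))^{N-j}/(N-j)!$ absorbs the parts equal to $0$. Lemma~\ref{L1} carries out precisely this regrouping, which is why the theorem follows immediately from it.
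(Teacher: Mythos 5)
Your proposal is correct and follows exactly the paper's own route: the paper likewise applies Lemma~\ref{L1} to $f(t)=\sum_{d\mid n}\frac{t\mu(d)}{e^{td}-1}$ and lets $t\to 0$, identifying the limiting values of $f$ and its derivatives with $\varphi(n)$ and the $M_{k_i}(n)$. You merely supply a few details the paper leaves implicit (analyticity of $g$ near $0$, the identification $g^{(m)}(0)=M_m(n)$ via \eqref{eq11}, and the $k=0$ case), which is welcome but not a different argument.
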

\begin{proof}
First note from definition that $\displaystyle M_k^N(n)=\lim_{t\rightarrow 0}D^k (H(0)).$ The result follows at once by applying Lemma \ref{L1} to the function $\displaystyle f(t)=\sum_{d|n}\frac{t\mu(d)}{e^{td}-1},~\displaystyle 0<t<\frac{2\pi}{n}$ and then taking limit $t\rightarrow 0$ throughout and using $\displaystyle \varphi(n)=\sum_{d|n}\frac{\mu(d)}{d}$ therein.
\end{proof}
\begin{proposition}\label{p1}
$M_{2k-1}^N(n)=0$ for all positive integers $k$ $\&$ $N$ and $n>1.$
\end{proposition}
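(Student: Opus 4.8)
The plan is to deduce the vanishing from a parity property of the generating function, exactly as one proves that $B_{2k-1}=0$ for $k\geq 2$. Write $\displaystyle f(t)=\sum_{d\mid n}\frac{t\mu(d)}{e^{td}-1}$, so that $H(t)=f(t)^N$ on $0<|t|<\tfrac{2\pi}{n}$. First I would record the elementary identity
\begin{equation*}
\frac{-s}{e^{-s}-1}=\frac{s e^{s}}{e^{s}-1}=s+\frac{s}{e^{s}-1},\qquad 0<|s|<2\pi,
\end{equation*}
obtained by multiplying numerator and denominator of the left side by $e^{s}$. Applying it with $s=td$ for each divisor $d\mid n$ gives
\begin{equation*}
f(-t)=\sum_{d\mid n}\frac{-t\mu(d)}{e^{-td}-1}=\sum_{d\mid n}\mu(d)\left(\frac{t}{e^{td}-1}+t\right)=f(t)+t\sum_{d\mid n}\mu(d).
\end{equation*}

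Next I would invoke the standard fact that $\sum_{d\mid n}\mu(d)=0$ for every $n>1$, whence $f(-t)=f(t)$; thus $f$, and therefore $H(t)=f(t)^{N}$, is an even function on the punctured interval (extending evenly through $0$, since the singularity at $0$ is removable). Comparing the Taylor expansion $H(t)=\sum_{k\geq 0}M_k^N(n)\,t^{k}/k!$ with $H(-t)=H(t)$ forces $M_k^N(n)=0$ for every odd $k$; in particular $M_{2k-1}^N(n)=0$ for all $k\geq 1$, which is the assertion.

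An equivalent route, should one prefer to argue directly from Theorem~\ref{th12}, is to observe that in \eqref{eq95} every composition $(k_1,\dots,k_j)$ of the odd number $2k-1$ must contain at least one odd part $k_i$, and that for $n\geq 2$ one has $M_{k_i}(n)=0$ whenever $k_i$ is odd. Indeed, by \eqref{eq12}, $M_1(n)=B_1\prod_{p\mid n}(1-p^{0})=0$ because the product is over the nonempty set of primes dividing $n$, while $M_{k_i}(n)=B_{k_i}\prod_{p\mid n}(1-p^{k_i-1})=0$ for odd $k_i\geq 3$ since $B_{k_i}=0$. Hence every summand in \eqref{eq95} carries a zero factor and $M_{2k-1}^N(n)=0$.

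There is no real obstacle here; the only points requiring a moment's care are the verification of the algebraic identity for $-s/(e^{-s}-1)$ (equivalently, that $\tfrac{t}{e^{t}-1}+\tfrac{t}{2}$ is even), and the observation that the $k=1$ instance of \eqref{eq12} already yields $M_1(n)=0$ for $n\geq 2$ via the nonempty product over primes dividing $n$ — which is precisely what makes the parity argument (and the $j$-part argument) go through for the exponent $2k-1=1$ as well.
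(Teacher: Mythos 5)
Your main argument is correct and is essentially the paper's own proof: the paper likewise writes $H(-t)=\left(t\sum_{d\mid n}\mu(d)+\sum_{d\mid n}\frac{t\mu(d)}{e^{td}-1}\right)^{N}=H(t)$ for $n>1$ using $\sum_{d\mid n}\mu(d)=\delta_{1n}$, concludes that $H$ is even, and reads off the vanishing of the odd Taylor coefficients. Your secondary route via Theorem~\ref{th12} (every composition of an odd integer has an odd part, and $M_{k_i}(n)=0$ for odd $k_i$ and $n>1$ since $M_1(n)$ carries the factor $1-p^{0}=0$ and $B_{k_i}=0$ for odd $k_i\geq 3$) is also sound, but it is not the paper's argument.
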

\begin{proof}
Observe from Eq.\eqref{eq13}  that for a positive integer $N,$ the following holds:
$$H(-t)=\left(t\sum_{d|n}\mu(d)+ \sum_{d|n}\frac{t \mu(d)}{e^{td}-1}\right)^N=\left(t\delta_{1n}+ \sum_{d|n}\frac{t \mu(d)}{e^{td}-1}\right)^N=H(t)$$ for all $n>1$ where the arithmetic function  $\sum_{d|n}\mu(d)=\delta_{1n}$ is the Kronecker delta. We have proved that $H$ is an even function of $t$ for $n>1.$ Thus  the coefficient of $t^{2k-1}$ in the RHS of Eq.\eqref{eq13} (which is precisely $M_{2k-1}^N(n)$) vanishes for each $k=1,2,...$
\end{proof}
\begin{remark} If we extend the definition of higher M\"obius-Bernoulli numbers to complex $N\neq 0,$
the formula \eqref{eq95} for $M_k^N(n)$ is still valid just on replacing $\frac{N!}{(N-j)!}$ by ${N(N-1)\cdots (N-j+1)}$ in it. In this regard we note that $M_{2k-1}^N(n>1)=0$ holds for all $k=1,2,...$ and $N\in\mathbb{C}.$
\end{remark}
In view of the theorem \ref{th12} and the proposition \ref{p1}, we have for all positive integers $n>1$ and $k=1,2,...,$
 {\fontsize{10}{0}\begin{equation}\label{eq95}  
        M_{2k}^N(n)=N!\sum_{j=1}^{k} \frac{(\varphi(n))^{N-{2j}}}{(N-2j)!}\sum_{\sum_{i=1}^{j}k_i=k}\left(\begin{array}{c}
                                                      2k \\
                                                      2k_1,...,2k_j
                                                    \end{array}\right) \frac{M_{2k_1}(n)\cdots M_{2k_j}(n)}{\lambda(2k_1)!\cdots \lambda(2k_j)!}.
    \end{equation}}
    As an example, let $k=4.$ There are five partitions of $4$ which are given by
    ${{4_1}, {3_11_1}, {2_2}, {2_11_1}, 1_4}$ and therefore from Eq.\eqref{eq95} we obtain
 \begin{equation*}\label{eq96}  
 \begin{split}
        M_{8}^N(n)&=N!\{\frac{\varphi(n)^{N-2}}{(N-2)!}M_{8}(n)+
        \frac{\varphi(n)^{N-4}}{(N-4)!}(\frac{8!}{6! 2!}M_6(n) M_2(n)+\frac{8!}{4!4!}\frac{(M_{4}(n))^2}{2!})\\&+ \frac{\varphi(n)^{N-6}}{(N-6)!}(\frac{8!}{4!2!2!}M_4(n)\frac{(M_2(n))^2}{2!})+\frac{\varphi(n)^{N-8}}{(N-8)!}\frac{8!}{2!2!2!2!}
        \frac{(M_2(n))^4}{4!}\}\\
        &=  N!\{\frac{\varphi(n)^{N-2}}{(N-2)!}M_{8}(n)+
        \frac{\varphi(n)^{N-4}}{(N-4)!}(28 M_6(n) M_2(n)+35{(M_{4}(n))^2})\\&+ \frac{\varphi(n)^{N-6}}{(N-6)!}(210M_4(n){(M_2(n))^2})+\frac{\varphi(n)^{N-8}}{(N-8)!}105
        {(M_2(n))^4}\}\\
                                                    \end{split}
                                                    \end{equation*}
\begin{remark}
The formula \eqref{eq95}  is not suitable for explicit evaluation of $M_k^N$ for large $k.$
Because number of partitions of $k$ increases at a faster rate than $k.$ For example number of partitions of $10$ is $42,$ which is the number of terms in the expression for $M_{20}^N.$ In this regards, it will be good to see a formula for the higher order M\"obius Bernoulli numbers which can describe them better than the one we have given above!
\end{remark}
\begin{remark} If $n=p^s$ for some positive integer $s$ and prime $p$, then the simplest possible formula \eqref{eq95} for the higher order M\"obius-Bernoulli numbers can be found as follows:
{\fontsize{10}{0}$$H(t)|_{n=p^s}=\left(\frac{t}{e^t-1}-\frac{t}{e^{tp}-1}\right)^N=\sum_{m=0}^{N}\left(\begin{array}{c}
                                                                         N \\
                                                                         m
                                                                       \end{array}\right)\left(\frac{t}{e^t-1}\right)^{m}
                                                                       \left(\frac{tp}{e^{tp}-1}\right)^{N-m} (-p)^{m-N}
$$}
Therefore
{\fontsize{10}{0}\begin{equation}\label{eq97}
M_{k}^{N}(p^s)=\lim_{t\rightarrow 0}\frac{d^kH(t)}{dt^k}=\sum_{m=0}^{N}\left(\begin{array}{c}
                                                                         N \\
                                                                         m
                                                                       \end{array}\right)(-p)^{m-N}\sum_{j=0}^{k}\left(\begin{array}{c}
                                                                         k \\
                                                                         j
                                                                       \end{array}\right)B_j^m B_{k-j}^{N-m} p^{k-j}
\end{equation}}
where we have utilized the Leibniz product rule for higher order derivatives and $B_j^m$ is the higher order Bernoulli number given by (see for more details Srivastava and Todorov \cite{p0})
\begin{equation*}\label{eq00}
   B_j^m=\sum_{\ell=0}^{j}\left(\begin{array}{c}
      j+m \\
      j-\ell
    \end{array}\right)\left(\begin{array}{c}
      j+m-1 \\
      \ell
    \end{array}\right)\frac{j!}{(j+\ell)!}\sum_{h=0}^{\ell}(-1)^{h}\left(\begin{array}{c}
      \ell \\
      h
    \end{array}\right)h^{j+\ell}.
\end{equation*}
Similarly if we take $n=p_1^{s_1}p_2^{s_2}$ for some positive integers $s_1,s_2$ and distinct primes $p_1,p_2$ then
{\fontsize{11}{0}\begin{equation*}\label{eq98}
\begin{split}
    H(t)|_{n=p_1^{s_1}p_2^{s_2}}&=\left(\frac{t}{e^t-1}-\frac{t}{e^{tp_1}-1}-\frac{t}{e^{tp_2}-1}+\frac{t}{e^{tp_1p_2}-1}\right)^N\\&=
\sum_{\sum_{i=0}^3m_i=N}\left(\begin{array}{c}
                                                                         N \\
                                                                         m_0,\cdots,m_3
                                                                       \end{array}\right)
                                                                       \left(\frac{t}{e^t-1}\right)^{m_0}
                                                                       \left(\frac{tp_1}{e^{tp_1}-1}\right)^{m_1}
                                                                       \left(\frac{tp_2}{e^{tp_2}-1}\right)^{m_2}\\&\times
                                                                       \left(\frac{tp_1p_2}{e^{tp_1p_2}-1}\right)^{m_3}
                                                                       (-p_1)^{-m_1}(-p_2)^{-m_2} (p_1p_2)^{-m_3}
\end{split}
\end{equation*}}
which gives
{\fontsize{11}{0}\begin{equation*}\label{eq98}
\begin{split}
    M_k^{N}(p_1^{s_1}p_2^{s_2})&=
\sum_{\sum_{i=0}^3m_i=N}\left(\begin{array}{c}
                                                                         N \\
                                                                         m_0,\cdots,m_3
                                                                       \end{array}\right){\lim_{t\rightarrow 0}}D^k\left\{
                                                                       \left(\frac{t}{e^t-1}\right)^{m_0}
                                                                       \left(\frac{tp_1}{e^{tp_1}-1}\right)^{m_1}\right.\\&\times \left.
                                                                       \left(\frac{tp_2}{e^{tp_2}-1}\right)^{m_2}
                                                                       \left(\frac{tp_1p_2}{e^{tp_1p_2}-1}\right)^{m_3}\right\}
                                                                       (-p_1)^{-m_1}(-p_2)^{-m_2} (p_1p_2)^{-m_3}\\&=
\sum_{\sum_{i=0}^3m_i=N}\left(\begin{array}{c}
                                                                         N \\
                                                                         m_0,\cdots,m_3
                                                                       \end{array}\right)\sum_{\sum_{j=0}^3k_i=k}\left(\begin{array}{c}
                                                                         k \\
                                                                         k_0,\cdots,k_3
                                                                       \end{array}\right)\\&\times B_{k_0}^{m_0}B_{k_1}^{m_1}B_{k_2}^{m_2}B_{k_3}^{m_3}
                                                                       p_1^{k_1-m_1}p_2^{k_2-m_2}(p_1p_2)^{k_3-m_3}(-1)^{m_1+m_2}
\end{split}
\end{equation*}}
These formulas involve products of higher order Bernoulli numbers. So in general, the formulas for $M_k^N(n)$ involve sums containing product of several higher order Bernoulli numbers and such a formula in the above sense would be complicated and will take the following form:
{\fontsize{11}{0}
\begin{equation*}\label{eq99}
\begin{split}
    M_k^{N}\left(\prod_{\alpha=1}^{\beta}p_\alpha^{e_\alpha}\right)&=
\frac{N!k!}{(N+k)!}\sum_{\sum_{i=0}^{2^\beta-1}m_i=N}
\sum_{\sum_{j=0}^{2^\beta-1}k_i=k}\left(\begin{array}{c}
                                                                         N+k \\
                                                                        m_0,...,m_{2^\beta-1}, k_0,...,k_{2^\beta-1}
                                                                       \end{array}\right)\\&\times B_{k_0}^{m_0}B_{k_1}^{m_1}
                                                                       \cdots B_{k_{2^\beta-1}}^{m_{2^\beta-1}} \chi(k_1,...,k_{2^\beta-1},m_1,...,m_{2^\beta-1})
\end{split}
\end{equation*}}
where
{\fontsize{10}{0}\begin{equation*}\label{eq99}
\begin{split}
    \displaystyle \chi(k_1,...,k_{2^\beta-1},m_1,...,m_{2^\beta-1})&=\prod_{s_0=1}^{\beta}(-p_{s_0})^{k_{s_0}-m_{s_0}}\cdots
\prod_{s_{\beta-2}<s_{\beta-1}=\beta}^{\beta
}((-1)^\beta p_{s_0}p_{s_1}\cdots p_{s_{\beta-1}})^{k_{2^{\beta}-1}-m_{2^\beta-1}}.
\end{split}
\end{equation*}}
\end{remark}
\section{Sums of products of M\"obius-Bernoulli power sums}
Having developed the expressions for the M\"obius Bernoulli numbers in the previous section, we will now use them in expressing the sums of products of the M\"obius-Bernoulli power sums $\Psi_k(x,n).$
\begin{definition}
We define sums of products of the M\"obius Bernoulli power sums as $\Psi_k^N(x,n):=\displaystyle \sum_{\sum_{i=1}^{N}k_i=k} \left(
                          \begin{array}{c}
                            k \\
                            k_1~\cdots~ k_N  \\
                          \end{array}
                        \right)\Psi_{k_1}(x,n)\cdots \Psi_{k_N}(x,n)$ for nonnegative integers $k$ and $N$ which are described by the generating function
                        \begin{equation}
                            \left(\sum_{d|n} \mu(d)\frac{e^{(1+\frac{x}{d})td} -e^{t d}}{e^{td}-1}\right)^N=\sum_{k=0}^{\infty}\Psi_k^N(x,n)\frac{t^k}{k!}.
                        \end{equation}

\end{definition}
The next result evaluates the sums of products $\Psi_k^N(x,n).$
\begin{theorem}\label{th2} For a positive integer $N$ and nonnegative integer $k,$
    $$\Psi_k^N(x,n)=\begin{array}{cl}
\displaystyle \frac{k! N!}{(k+N)!}\sum_{j=0}^{k}\left(\begin{array}{c}
                                                   k+N \\
                                                   j
                                                 \end{array}\right)
                                                 M_{j}^{N}(n)  S(k+N-j,N) x^{k+N-j}, & ~\end{array}$$ for all ~$n=2,3,...$
                                                 where $S(\ell,m)$ are the Stirling numbers of second kind.
\end{theorem}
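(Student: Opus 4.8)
The plan is to collapse the generating function of $\Psi^N_k(x,n)$ into a product of two explicit power series and then extract the coefficient of $t^k$. First I would simplify the generating function used to define $\Psi_k(x,n)$. Since $(1+x/d)td=td+xt$, we have $e^{(1+x/d)td}-e^{td}=e^{td}(e^{xt}-1)$, and writing $\dfrac{e^{td}}{e^{td}-1}=1+\dfrac{1}{e^{td}-1}$ together with $\sum_{d|n}\mu(d)=\delta_{1n}$ gives
$$\sum_{d|n}\mu(d)\frac{e^{(1+x/d)td}-e^{td}}{e^{td}-1}=(e^{xt}-1)\Bigl(\delta_{1n}+\frac1t\sum_{d|n}\frac{t\mu(d)}{e^{td}-1}\Bigr).$$
For $n\geq 2$ the Kronecker delta vanishes, so by \eqref{eq11} this equals $\dfrac{e^{xt}-1}{t}\displaystyle\sum_{k\geq 0}M_k(n)\frac{t^k}{k!}$. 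This is the one place where the hypothesis $n\geq 2$ is essential: it is exactly what lets the power-sum generating function factor through the M\"obius--Bernoulli numbers.

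Next I would raise this identity to the $N$-th power. By the definition \eqref{eq13} of the higher order M\"obius--Bernoulli numbers one has $\bigl(\sum_{k\geq 0}M_k(n)t^k/k!\bigr)^N=\sum_{k\geq 0}M_k^N(n)t^k/k!$, so the generating function of $\Psi_k^N(x,n)$ equals
$$\frac{(e^{xt}-1)^N}{t^N}\,\sum_{k\geq 0}M_k^N(n)\frac{t^k}{k!}.$$
The combinatorial ingredient is the classical exponential generating identity $(e^{y}-1)^N=N!\sum_{m\geq N}S(m,N)\,y^m/m!$; putting $y=xt$ and using $S(m,N)=0$ for $m<N$ yields
$$\frac{(e^{xt}-1)^N}{t^N}=N!\sum_{\ell\geq 0}S(\ell+N,N)\,\frac{x^{\ell+N}\,t^\ell}{(\ell+N)!},$$
which is a genuine (nonnegative-power) power series in $t$ precisely because the would-be negative powers of $t$ carry vanishing coefficients.

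Finally I would multiply the two series and read off the coefficient of $t^k$, namely
$$N!\sum_{j=0}^{k}S(k+N-j,N)\,\frac{x^{k+N-j}}{(k+N-j)!}\cdot\frac{M_j^N(n)}{j!};$$
multiplying by $k!$ (the generating-function convention makes $\Psi_k^N(x,n)$ equal to $k!$ times this coefficient) and rewriting $\dfrac{k!}{(k+N-j)!\,j!}=\dfrac{k!}{(k+N)!}\dbinom{k+N}{j}$ produces the stated formula. I do not expect any serious obstacle; the only care needed is in the index shift $m=\ell+N$ that turns the Stirling expansion into an honest power series, and in tracking the two factorial normalisations, the $N!$ from the Stirling identity and the $k!$ from coefficient extraction, so that they assemble into the prefactor $k!\,N!/(k+N)!$.
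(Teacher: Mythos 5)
Your proof is correct and follows essentially the same route as the paper: factor the generating function as $\left(\frac{e^{xt}-1}{t}\right)^N$ times the $N$-th power of the M\"obius--Bernoulli generating function, expand the first factor via the Stirling-number identity, and compare coefficients of $t^k$. The only difference is cosmetic --- you set $\delta_{1n}=0$ at the outset for $n\geq 2$, whereas the paper carries the $\delta_{1n}$ term through a binomial expansion and discards it at the end.
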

Note that from theorem \ref{th2} we recover for $N=1$
$$\Psi_k^1(x,n)=\frac{1}{k+1}\sum_{j=0}^{k}\left(\begin{array}{c}
                                                   k+1 \\
                                                   j
                                                 \end{array}\right)
                                               x^{k+1-j}  B_j\prod_{p|n}(1-p^{j-1}) ,~\forall~n=2,3,...$$
where we have used $M_j^1(n)=B_j\prod_{p|n}(1-p^{j-1}),$ and  $S(k+1-j,1)=1$ for all $j=0,1,,...k.$
\begin{proof} Observe from the generating function for $\Psi_k^N(x)$ that
{\fontsize{10}{0}\begin{equation}\label{eq14}
 \begin{split}
      \left(\sum_{d|n} \mu(d)\frac{e^{(1+\frac{x}{d})td} -e^{t d}}{e^{td}-1}\right)^N&=\left(\sum_{d|n} \mu(d)\frac{t e^{td}}{e^{td}-1}\right)^N \left(\frac{e^{xt}-1}{t}\right)^N\\
      &=\left(t\sum_{d|n} \mu(d)+\sum_{d|n}\frac{t\mu(d)}{e^{td}-1}\right)^N \left(\frac{e^{xt}-1}{t}\right)^N\\
      &=\left(t\delta_{1n}+\sum_{d|n}\frac{t\mu(d)}{e^{td}-1}\right)^N \left(\frac{e^{xt}-1}{t}\right)^N\\
      &=\sum_{m=0}^{N}\left(\begin{array}{c}
                                                   N \\
                                                   m
                                                 \end{array}\right)t^{N-m}\delta_{1n}^{N-m}\left(
                                                 \sum_{d|n}\frac{t\mu(d)}{e^{td}-1}\right)^m \left(\frac{e^{xt}-1}{t}\right)^N
                                                 \end{split}
                                                 \end{equation}
                                                 which on further simplifications give
                                                 \begin{equation}
      \sum_{k=0}^{\infty}\Psi_k^N(x,n)\frac{t^k}{k!} =\sum_{k=0}^\infty \sum_{m=0}^{N}\left(\begin{array}{c}
                                                   N \\
                                                   m
                                                 \end{array}\right)\sum_{j=0}^{k}\left(\begin{array}{c}
                                                   k \\
                                                   j
                                                 \end{array}\right) \delta_{1n}
                                                 M_{k-j}^{m}(n) \frac{j! N!}{(j+N)!} S(j+N,N) x^{j+N}\frac{t^k}{k!}
    \end{equation}
  }where we have used the identity $\displaystyle \left(\frac{e^{xt}-1}{t}\right)^N=\sum_{j=0}^\infty \frac{k! N!}{(j+N)!} S(j+N,N) x^{j+N}\frac{t^{j}}{j!}.$ On comparing like powers of $t$ in Eq.\eqref{eq14} gives
  $$\displaystyle \Psi_k^N(x,n)=\sum_{m=0}^{N}\left(\begin{array}{c}
                                                   N \\
                                                   m
                                                 \end{array}\right)\sum_{j=0}^{k}\left(\begin{array}{c}
                                                   k \\
                                                   j
                                                 \end{array}\right) (\delta_{1n})^{N-m}
                                                 M_{k-j}^{m}(n) \frac{j! N!}{(j+N)!} S(j+N,N) x^{j+N}$$
for all $k=0,1,...; ~n, N=1,2,...$ where we define $\delta_{1n}^0:=1$ for all $n=1,2,...$ The result follows now.
\end{proof}
\subsection*{Acknowledgements}
Many suggestions regarding presentation of the paper by Professor L\'aszl\'o T\'oth are gratefully acknowledged.
\bibliographystyle{plain}

\end{document}